\newtheorem{thm}{Theorem}[section]
\newtheorem{lem}[thm]{Lemma}
\newtheorem{cor}[thm]{Corollary}
\newtheorem{prop}[thm]{Proposition}
\theoremstyle{definition}
\newtheorem{defn}[thm]{Definition}
\theoremstyle{remark}
\newtheorem{rem}[thm]{Remark}        
\numberwithin{equation}{section}
\newcommand{\B}[2]{B_{#1}(#2)}
\newcommand{\Pro}{\mathcal{P}}
\newcommand{\R}{\mathbb{R}}
\def\@makefnmark{%
\leavevmode
\raise.9ex\hbox{\check@mathfonts
\fontsize\sf@size\z@\normalfont%
\@thefnmark}%
}
\title[Lower bound of coarse Ricci curvature and eigenvalues]{Lower bound of coarse Ricci curvature on metric measure spaces and eigenvalues of Laplacian}
\author{Yu Kitabeppu}
\thanks{Partly supported by the Grant-in-Aid for JSPS Fellows, The Ministry of Education, Culture, Sports, Science and Technology, Japan}
\begin{document}
\maketitle
 \begin{abstract} 
 In this paper, we investigate the coarse Ricci curvature on metric spaces. 
 We prove that a Bishop-Gromov 
 inequality gives a lower bound of coarse Ricci curvature. 
 The lower bound does not coincide with the constant 
 corresponding to curvature in Bishop-Gromov inequality. 
 As a corollary, we obtain a lower bound of coarse Ricci curvature for a metric space satisfying 
 the curvature-dimension condition.
 Moreover we give an important example, Heisenberg group, 
 which does not satisfy the curvature-dimension condition 
 for any constant but has a lower bound of coarse Ricci curvature. 
 We also have an estimate of the eigenvalues of the Laplacian by a lower bound of coarse Ricci 
 curvature.  
 \end{abstract}
   
 \section{Introduction}
  In this paper, we investigate coarse Ricci curvature on metric spaces. 
  Ollivier \cite{O1} defined a notion of \emph{coarse Ricci curvature}. 
  He obtained a relation between the coarse Ricci curvature 
  and the ordinary Ricci curvature on a Riemannian manifold (see \cite{O1}). On the other hand, 
  Lott, Villani and Sturm introduced the curvature-dimension condition, 
  which is a generalization of a Ricci curvature bounded by below 
  for geodesic metric spaces \cite{LV,St1,St2}. 
  However, no relation of these two definitions have been known. 
  One of our motivation is to find such a relation for 
  general metric measure spaces. 
  In this paper we prove that a Bishop-Gromov inequality (see Definition \ref{BGineq}) 
  gives a lower bound of the coarse 
  Ricci curvature. Note that the curvature-dimension condition implies the Bishop-Gromov inequality
  \cite{LV,St2}. 
  We also prove that the Heisenberg group with the sub-Riemannian metric is an example which 
  does not satisfy the curvature-dimension condition but has a lower bound of coarse Ricci curvature. 
  As another result, we establish an eigenvalue estimate for the Laplacian 
  by a lower bound of the coarse Ricci curvature. 
  For graphs, more precise estimates of a lower bound of 
  the coarse Ricci curvature and the eigenvalue of Laplacian 
  have already been given
  by Lin-Yau, Jost-Bauer-Liu, Jost-Liu \cite{LYY,BJL,JL}. 
  Our eigenvalue estimate is for general metric spaces and for not only positive lower bound but 
  also lower bound of any real number. 
  
  Let $(X,d)$ be a complete separable metric space. 
  We denote by $\Pro(X)$ the set of all Borel probability measures on $X$. 
  We call a family $\{m_x\}_{x\in X}$ of measures in $\Pro(X)$ \emph{a random walk}
   (see Section 2 for the precise definition). 
  The following is defined by Ollivier \cite{O1}. 
   \begin{defn}[Coarse Ricci curvature]
    For any two points $x,y\in X$, the \emph {coarse Ricci curvature} 
    $\kappa(x,y)$ associated with $\{m_x\}_{x\in X}$ along $xy$ is defined by
     \begin{align}
     \label{cRiccidef}
      \kappa(x,y):=1-\frac{W_1(m_x,m_y)}{d(x,y)},
     \end{align}
    where $W_1$ is the $L^1$-Wasserstein metric.
   \end{defn}
   It is clear that $\kappa(x,y)\leq 1$ for any $x,y\in X$. 
   
   In this paper we call a complete separable metric space with a positive Radon measure 
   \emph{a metric measure space}. 
   We consider the following random walk. 
   \begin{itemize}
    \item \emph{The $r$-step random walk} :  
             \begin{align}
              m^r_x=\frac{\chi_{B_r(x)}}{\nu(B_r(x))}\nu,\notag
             \end{align}
             on a metric measure space $(X,d,\nu)$, 
             where $\chi_{B_r(x)}$ denotes the characteristic function of $B_r(x)$. 
             In this paper, we denote the open ball 
             of radius $r>0$ and centered at $x\in X$ by $B_r(x)$.
   \end{itemize}
   We define an important notion.
   \begin{defn}
   \label{BGineq}
   For two real numbers $K$ and $N>1$, 
   we define a function $s_{K,N} : [0,\infty)\rightarrow \mathbb{R}$, by 
   \begin{align}
     s_{K,N}(t):=\begin{cases}
                          \sqrt{(N-1)/K}\sin (t\sqrt{K/(N-1)}) & \text{if}\; K>0,\\
                          t & \text{if}\; K=0,\\
                          \sqrt{(N-1)/-K}\sinh (t\sqrt{-K/(N-1)}) & \text{if}\; K<0.
                         \end{cases}\notag
   \end{align}
   A metric measure space $(X,d,\nu)$ satisfies 
   \emph{the Bishop-Gromov inequality $[BG_{K,N}]$} if 
   \begin{align}
    \frac{\nu\left(B_R(x)\right)}{\nu\left(B_r(x)\right)}
    \leq \frac{\int_0^R s_{K,N}(t)^{N-1}\, dt}{\int_0^r s_{K,N}(t)^{N-1}\, dt}\label{BGineq}
   \end{align}
   holds for any $x\in X$ and for any $0<r<R\leq \pi\sqrt{(N-1)/\max {\{K,0\}}}$ with 
   the convention $1/0=\infty$.
   \end{defn}
   A metric space $(X,d)$ is a geodesic metric space if for any $x,y\in X$, there exists 
   a curve $\gamma :[0,1]\rightarrow X$ joining $x$ to $y$ such that  
   $d(\gamma(s),\gamma(t))=|s-t|d(x,y)$ for any $s,t\in [0,1]$. 
  The following theorem is one of the main results in this paper.
  \begin{thm}
  \label{main1}
   Let $(X,d,\nu)$ be a geodesic metric measure space satisfying 
   $[BG_{K,N}]$ for two real numbers $K$ and $N>1$. 
   Then, for any $0<r<\pi\sqrt{(N-1)/\max{\{K,0\}}}$, the coarse Ricci curvature associated with 
   the $r$-step random walk satisfies 
   \begin{align}
    \inf_{x,y\in X}\kappa(x,y)\geq 1-2r\frac{s_{K,N}(r)^{N-1}}{\int_0^r s_{K,N}(t)^{N-1}\, dt}.\label{cor1eq1}
   \end{align}
  \end{thm}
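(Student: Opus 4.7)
The plan is to reduce the theorem to an infinitesimal estimate by sliding along a minimizing geodesic and applying the triangle inequality for $W_1$. Fix $x,y\in X$, set $D=d(x,y)$, and let $\gamma:[0,D]\to X$ be a unit-speed minimizing geodesic (which exists since $(X,d)$ is geodesic). For a positive integer $n$, set $\epsilon=D/n$ and $z_i=\gamma(i\epsilon)$, so that
\[
W_1(m_x^r,m_y^r)\leq \sum_{i=0}^{n-1} W_1(m_{z_i}^r,m_{z_{i+1}}^r);
\]
it therefore suffices to bound each summand $W_1(m_p^r,m_q^r)$ for a pair with $d(p,q)=\epsilon$, and then take $n\to\infty$.

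For such a pair I would construct an explicit coupling. Write $A=B_r(p)\cap B_r(q)$, $U=B_r(p)\setminus B_r(q)$, $U'=B_r(q)\setminus B_r(p)$, and (by symmetry) assume $V_p:=\nu(B_r(p))\leq V_q:=\nu(B_r(q))$. Transport the common density $1/V_q$ on $A$ diagonally at zero cost. The residual mass of $m_p^r$ has density $1/V_p-1/V_q$ on $A$ and $1/V_p$ on $U$, and must be delivered to $U'$, where the residual mass of $m_q^r$ has density $1/V_q$; a short computation using $\nu(A)+\nu(U')=V_q$ shows both residual totals equal $\nu(U')/V_q$. Since $A\cup U'\subset B_r(q)$, any coupling of the $A$-residual with a portion of $U'$-mass moves mass at distance at most $2r$; since $U\cup U'\subset B_r(p)\cup B_r(q)$, the transport from $U$ to the remaining portion of $U'$ moves at distance at most $2r+\epsilon$. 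Combining,
\[
W_1(m_p^r,m_q^r)\leq 2r\cdot\frac{\nu(U')}{V_q}+\epsilon\cdot\frac{\nu(U)}{V_p}.
\]

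Writing $V(t):=\int_0^t s_{K,N}(s)^{N-1}\,ds$, the hypothesis $[BG_{K,N}]$ controls both shell terms (valid for small enough $\epsilon$ since $r<\pi\sqrt{(N-1)/\max\{K,0\}}$). The inclusion $U'\subset B_{r+\epsilon}(p)\setminus B_r(p)$ gives $\nu(U')\leq V_p(V(r+\epsilon)-V(r))/V(r)$, and combined with $V_p\leq V_q$ yields $\nu(U')/V_q\leq (V(r+\epsilon)-V(r))/V(r)$; likewise $U\subset B_r(p)\setminus B_{r-\epsilon}(p)$ yields $\nu(U)/V_p\leq (V(r)-V(r-\epsilon))/V(r)=O(\epsilon)$, so the second term of the one-step bound is $O(\epsilon^2)$. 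Summing the $n$ bounds with $n\epsilon=D$ and letting $n\to\infty$,
\[
W_1(m_x^r,m_y^r)\leq 2rD\cdot\frac{V'(r)}{V(r)}=2rD\cdot\frac{s_{K,N}(r)^{N-1}}{\int_0^r s_{K,N}(t)^{N-1}\,dt},
\]
and dividing by $D$ and taking the infimum over $x,y$ yields the claimed lower bound on $\kappa(x,y)$.

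The main technical point I expect to be delicate is obtaining the sharp constant $2r$ (not $2r+\epsilon$) for the $A$-to-$U'$ transport: if one naively bounds this by the diameter of $B_r(p)\cup B_r(q)$ the extra $\epsilon$ survives the Riemann sum and contributes a spurious $D$ factor in the limit. The key observation that removes this is simply that $A$ and $U'$ both sit inside the single ball $B_r(q)$ of diameter $2r$. A subsidiary piece of bookkeeping is handling the asymmetry between $V_p$ and $V_q$ cleanly so that the secondary shell term genuinely vanishes.
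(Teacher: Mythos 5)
Your proof is correct and follows essentially the same route as the paper: you bound $W_1(m^r_p,m^r_q)$ for nearby $p,q$ by a transport plan that fixes the common mass on $B_r(p)\cap B_r(q)$ and moves the residual a distance at most $2r+O(\epsilon)$, estimate the residual mass via the Bishop--Gromov inequality, and pass to the infinitesimal limit along a geodesic. The only cosmetic differences are that the paper centers the Bishop--Gromov comparison at the midpoint of $p$ and $q$ and globalizes the local estimate by citing Proposition \ref{geodproperty} rather than by your explicit chaining (which is precisely how that proposition is proved); also, your extra care in obtaining the constant $2r$ rather than $2r+\epsilon$ on the $A$-to-$U'$ leg is not actually needed, since the $\epsilon$ excess multiplies a quantity that is already $O(\epsilon)$ per step and therefore disappears in the limit in any case.
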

  \begin{rem}
   Ollivier \cite {O1} obtained a relation between the coarse Ricci curvature associated 
   with the $r$-step random walk 
   and  the ordinal Ricci curvature in the Riemannian case. However 
   he had only an asymptotic estimates as $r$ tends to zero. In particular, his estimate 
   gives no lower bound of the coarse Ricci curvature if the manifold is noncompact. 
   Theorem \ref{main1} gives a priori estimate for each $r>0$. 
  \end{rem}
  \begin{rem}
   For any $r>0$, calculating the right-hand side of (\ref{cor1eq1}), 
   we have $\inf_{x,y\in X}\kappa(x,y)\geq 1-2N$ on a metric measure space 
   $X$ which satisfies $[BG_{0,N}]$. 
   In the case of $[BG_{K,N}]$, we obtain
   \begin{align}
    \lim_{r\rightarrow 0}\inf_{x,y\in X}\kappa(x,y)\geq 1-2N.\notag
   \end{align} 
   \end{rem}
   According to \cite{J}, the $n$-dimensional \emph{Heisenberg group $\mathbb{H}^n$} 
   with left invariant sub-Riemannian metric 
   does not satisfy $[CD_{K,N}]$ for any $K$ and $N>1$. 
   Nevertheless $\mathbb{H}^n$ satisfies $[BG_{0,2n+3}]$ (see \cite{J,Oh}), 
   which together with Theorem \ref{main1} implies the following.
   \begin{cor}
   \label{heisenberg}
    The coarse Ricci curvature 
    associated with the $r$-step random walk on the $n$-dimensional Heisenberg group 
    $\mathbb{H}^n$ satisfies 
    \begin{align}
     \inf_{x,y\in \mathbb{H}^n}\kappa(x,y)\geq 1-2(2n+3).\notag
    \end{align}
   \end{cor}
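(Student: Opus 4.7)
The plan is to obtain Corollary \ref{heisenberg} as a direct application of Theorem \ref{main1} with the specific curvature-dimension parameters available for the Heisenberg group. The key inputs I would line up first are: (i) the sub-Riemannian Heisenberg group $\mathbb{H}^n$ with its Carnot--Carath\'eodory metric is a geodesic metric space (every pair of points is joined by a sub-Riemannian geodesic), (ii) a Haar measure on $\mathbb{H}^n$ makes it a metric measure space, and (iii) by the results cited from \cite{J,Oh}, $\mathbb{H}^n$ satisfies $[BG_{0,2n+3}]$. Once these are in place, Theorem \ref{main1} applies with $K=0$ and $N=2n+3$ for every $r>0$ (since $\max\{K,0\}=0$ and $\pi\sqrt{(N-1)/0}=\infty$ under the stated convention).

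The remaining step is simply to evaluate the right-hand side of \eqref{cor1eq1} in the flat case. With $K=0$ one has $s_{K,N}(t)=t$, so
\begin{align}
1-2r\frac{s_{0,N}(r)^{N-1}}{\int_0^r s_{0,N}(t)^{N-1}\,dt}=1-2r\frac{r^{N-1}}{r^{N}/N}=1-2N.\notag
\end{align}
Substituting $N=2n+3$ yields the claimed bound $1-2(2n+3)$, uniformly in the radius $r>0$ of the random walk.

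There is essentially no serious obstacle here, because the corollary is a specialization of Theorem \ref{main1}: the content is really contained in (a) Theorem \ref{main1} itself and (b) the Bishop-Gromov inequality for $\mathbb{H}^n$ quoted from the literature. The only point that merits a brief comment is to emphasize that although $\mathbb{H}^n$ fails $[CD_{K,N}]$ for every $K$ and $N>1$ (by \cite{J}), so one cannot deduce a coarse Ricci bound via the curvature-dimension route, the weaker $[BG_{0,2n+3}]$ suffices to invoke Theorem \ref{main1} and still yields a uniform lower bound of the coarse Ricci curvature independent of the step size $r$.
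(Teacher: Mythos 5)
Your proposal is correct and follows exactly the paper's route: the paper deduces the corollary by combining the fact that $\mathbb{H}^n$ satisfies $[BG_{0,2n+3}]$ (cited from \cite{J,Oh}) with Theorem \ref{main1}, and the computation $1-2r\,s_{0,N}(r)^{N-1}/\int_0^r s_{0,N}(t)^{N-1}\,dt=1-2N$ is precisely the one noted in the paper's remark on the case $[BG_{0,N}]$. Nothing further is needed.
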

   \begin{rem}
   Corollary \ref{heisenberg} says that even if 
   the coarse Ricci curvature is bounded by below, the curvature-dimension condition 
   does not hold in general.  
   \end{rem}
    
  To state another result, we define a version of Laplacian. Let $(X,d,\{m_x\}_{x\in X})$ be 
  a complete separable 
  metric space with a random walk.  
   \begin{defn}[Laplacian]
     We define \emph{the Laplacian of a function} $f:X\rightarrow \R$ by
     \begin{align}
      \Delta f(x):=f(x)-\int_X f(y)\, dm_x(y)
     \end{align}
    for $x\in X$, whenever the right-hand side is defined.
    \end{defn}
  We consider the eigenvalue problem for the Laplacian on $(X,d,\{m_x\}_{x\in X})$.  
  The following result holds even for discrete spaces and even for a general random walk.  
   \begin{thm}
   \label{main2}
   Assume that 
   $\kappa(x,y)\geq \kappa$ for any $x,y\in X$ and for a constant $\kappa$.
   Then we have the following.
     \begin{enumerate}
      \item If there exists a nonconstant Lipschitz function $f$ on $X$ such that $\Delta f=\lambda f$ for 
      a real number $\lambda$, 
      then it turns out that $\kappa\leq\lambda\leq 2-\kappa$.\\
      \item If $\kappa$ is positive and if a Lipschitz function $f$ on $X$ satisfies $\Delta f=0$, 
      then $f$ is a constant function.
     \end{enumerate}
   \end{thm}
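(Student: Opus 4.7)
The plan is to exploit the Kantorovich--Rubinstein duality, which identifies $W_1(\mu,\nu)$ with the supremum of $|\int g\,d\mu-\int g\,d\nu|$ over $1$-Lipschitz functions $g$. Consequently, for any Lipschitz function $f$ with Lipschitz constant $L=\mathrm{Lip}(f)$, we have
\begin{equation*}
\left|\int f\,dm_x-\int f\,dm_y\right|\leq L\cdot W_1(m_x,m_y)\leq L(1-\kappa)\,d(x,y),
\end{equation*}
where the second inequality comes directly from the hypothesis $\kappa(x,y)\geq \kappa$ and the definition \eqref{cRiccidef}.

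For part (1), I would rewrite the eigenvalue equation $\Delta f=\lambda f$ as $\int f\,dm_x=(1-\lambda)f(x)$ and subtract the same identity at $y$ to obtain
\begin{equation*}
(1-\lambda)\bigl(f(x)-f(y)\bigr)=\int f\,dm_x-\int f\,dm_y.
\end{equation*}
Combining this with the displayed inequality above yields
\begin{equation*}
|1-\lambda|\cdot |f(x)-f(y)|\leq L(1-\kappa)\,d(x,y)
\end{equation*}
for all $x,y\in X$. Dividing by $d(x,y)$ and taking the supremum over $x\neq y$ makes the left-hand side equal to $|1-\lambda|\cdot L$. Since $f$ is nonconstant, $L>0$, so one may cancel $L$ to conclude $|1-\lambda|\leq 1-\kappa$, i.e.\ $\kappa\leq \lambda\leq 2-\kappa$.

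Part (2) is then an immediate consequence: if $f$ were a nonconstant Lipschitz function with $\Delta f=0$, applying part (1) with $\lambda=0$ would give $\kappa\leq 0$, contradicting the assumption $\kappa>0$. Hence $f$ must be constant.

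There is no real obstacle here; the argument is a one-line application of Kantorovich--Rubinstein duality together with the tautological rewriting of the eigenvalue equation. The only subtle point is ensuring that we apply the duality to $f/L$ (to turn $f$ into a $1$-Lipschitz function) and that we realize the supremum in the definition of $L$, which is why one must take the supremum over $x\neq y$ after the pointwise estimate rather than trying to absorb the Lipschitz constant on one side only.
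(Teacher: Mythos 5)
Your proof is correct and follows essentially the same route as the paper: Kantorovich--Rubinstein duality bounds $\int f\,dm_x-\int f\,dm_y$ by $\mathrm{Lip}(f)\,(1-\kappa)\,d(x,y)$, the eigenvalue equation turns that integral difference into $(1-\lambda)(f(x)-f(y))$, and taking the supremum over $x\neq y$ yields $|1-\lambda|\leq 1-\kappa$. The only (harmless) cosmetic differences are that the paper normalizes $f$ to be $1$-Lipschitz at the outset and separately dispatches the case $\lambda=1$, whereas you carry the constant $L$ through and cancel it, which makes that case split unnecessary.
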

   Theorem \ref{main2}$(2)$ is a Liouville type theorem. 
   Moreover, if $X$ is compact, we have the following. 
   \begin{cor}
   \label{cor2}
    Let $(X,d,\nu)$ be a compact geodesic metric measure space with a lower bound, 
    say $\kappa$, of the coarse Ricci curvature associated with the $r$-step random walk. 
    Assume that $X$ satisfies a Bishop-Gromov inequality, 
    then any eigenvalue $\lambda$ of the Laplacian satisfies 
    \begin{align}
     \kappa\leq \lambda\leq 2-\kappa. \label{cor2eq}
    \end{align} 
   \end{cor}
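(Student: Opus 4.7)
The plan is to reduce the corollary to Theorem~\ref{main2}(1). The only piece missing from that theorem's hypothesis is that any eigenfunction of the Laplacian is Lipschitz; the compactness and Bishop--Gromov assumptions are precisely what I would use to secure this regularity. The case $\lambda = 1$ I would dispose of immediately: since $\kappa(x,y) \leq 1$ for all $x, y$ by definition, the inequality $\kappa \leq 1 \leq 2 - \kappa$ is automatic.

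Now assume $\lambda \neq 1$ and let $f$ be a bounded nonconstant eigenfunction with $\Delta f = \lambda f$. Rewriting the eigenvalue equation, $f(x) = (1-\lambda)^{-1} \int f \, dm_x^r$, so for $\delta := d(x,y)$,
\begin{align}
|f(x) - f(y)| \leq \frac{\|f\|_\infty}{|1-\lambda|} \, \|m_x^r - m_y^r\|_{TV}.\notag
\end{align}
A routine decomposition of $m_x^r - m_y^r$ into a ``symmetric-difference'' part and a ``normalizing-constant'' part yields the bound $\|m_x^r - m_y^r\|_{TV} \leq C\, \nu(B_r(x) \triangle B_r(y)) / \min\{\nu(B_r(x)), \nu(B_r(y))\}$. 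Compactness together with $[BG_{K,N}]$ applied at $R = \mathrm{diam}(X)$ provides a uniform positive lower bound on $\nu(B_r(x))$. The triangle inequality forces $B_r(x) \triangle B_r(y) \subset (B_r(x) \setminus B_{r-\delta}(x)) \cup (B_r(y) \setminus B_{r-\delta}(y))$, and Bishop--Gromov controls the measure of each annulus by a quantity of order $\delta$, obtained by writing $\nu(B_r(x)) - \nu(B_{r-\delta}(x))$ in terms of $\int_{r-\delta}^r s_{K,N}(t)^{N-1}\, dt$ and using the volume lower bound. Hence $f$ is Lipschitz on $X$.

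With $f$ Lipschitz, Theorem~\ref{main2}(1) applied to $f$ immediately yields $\kappa \leq \lambda \leq 2 - \kappa$. The main obstacle I anticipate is the measure-theoretic annulus estimate: producing uniform $O(\delta)$ control of $\nu(B_r(x) \setminus B_{r-\delta}(x))$ across $x \in X$. This should be standard machinery once the uniform volume lower bound is in place, but it is the step where the Bishop--Gromov hypothesis is genuinely used---the geodesic and compact assumptions alone would not suffice to upgrade the continuity of $x \mapsto m_x^r$ into Lipschitz regularity of eigenfunctions.
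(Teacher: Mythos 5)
Your proposal is correct and follows essentially the same route as the paper: reduce to Theorem~\ref{main2} by proving that any eigenfunction for $\lambda\neq 1$ is Lipschitz, with the Bishop--Gromov inequality supplying the $O(d(x,y))$ control on $\nu\left(B_r(x)\triangle B_r(y)\right)$ (the paper isolates this as Lemma~\ref{lem3}, resting on Lemma~\ref{lem2}). The only immaterial difference is the geometric mechanism for that estimate --- the paper sandwiches $B_r(x)\cap B_r(y)$ between balls $B_{r- d/2}(z)$ and $B_{r+d/2}(z)$ centred at a midpoint $z$, whereas you bound the symmetric difference by the annuli $B_r(x)\setminus B_{r-\delta}(x)$ and $B_r(y)\setminus B_{r-\delta}(y)$ --- and both yield the same local Lipschitz bound, which compactness then globalizes.
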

   \begin{rem}
    Ollivier proved a similar claim in his paper \cite{O1}, for which  
    he assumed a positivity of coarse Ricci curvature and 
    a reversibility of invariant distribution. 
   \end{rem}
 \section{Definition and Basic Properties of Coarse Ricci Curvature}
  Let $(X,d)$ be a separable metric space and $\Pro(X)$ the set of all Borel probability measures 
  on $X$. 
  For $\mu,\nu\in\Pro(X)$, 
  a measure $\xi\in\Pro(X\times X)$ is called \emph{a coupling between $\mu$ and $\nu$} 
  if $\xi$ satisfies $\xi(A\times X)=\mu(A)$ and $\xi(X\times B)=\nu(B)$ 
  for any Borel sets $A,B\subset X$. $\Pi(\mu,\nu)$ denotes the set of all couplings between 
  $\mu$ and $\nu$. 
  Let $\Pro_p(X)$ for $p\geq 1$ be the set of all Borel probability measures 
  which have finite $p$-th moment, 
  where the $p$-th moment of $\mu\in\Pro(X)$ is defined to be 
  $\int_Xd(x,o)^p\,d\mu(x)$ for a point $o$ 
  in $X$.
  It is easy to check that $\Pro_p(X)\subset \Pro_q(X)$ for $p>q$. 
  We define \emph{the $L^p$-Wasserstein metric $W_p$} on $\Pro_p(X)$ by 
   \begin{align}
    W_p(\mu,\nu):=\inf_{\xi\in\Pi(\mu,\nu)}\left\{\int_{X\times X}d(x,y)^p\, d\xi(x,y)\right\}^{1/p}
   \end{align}
  for $\mu,\nu\in\Pro_p(X)$. 
  The metric space $(\Pro_p(X), W_p)$ is called \emph{the $L^p$-Wasserstein space}. 
  The following is a well-known 
  result (see \cite{V1,V2}).
   \begin{lem}[Kantorovich-Rubinstein duality] 
    For any $\mu,\nu\in \Pro_1(X)$ we have 
     \begin{align}
     \label{Kduality}
      W_1(\mu,\nu)=\sup_{f\in Lip_1(X)}\left(\int_Xf\,d\mu-\int_Xf\,d\nu\right),
     \end{align}
    where $Lip_1(X)$ is the set of all 1-Lipschitz functions on $X$.
   \end{lem}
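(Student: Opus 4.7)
The proof splits into the two standard inequalities; one is a direct computation while the other requires a duality argument that typically relies on Hahn--Banach or a minimax theorem.

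For the easy direction, $W_1(\mu,\nu) \geq \sup_{f \in Lip_1(X)} \bigl(\int f\, d\mu - \int f\, d\nu\bigr)$, I would argue as follows. Fix any $f \in Lip_1(X)$ and any coupling $\xi \in \Pi(\mu,\nu)$. By the marginal conditions and the $1$-Lipschitz property,
\begin{align}
\int_X f\, d\mu - \int_X f\, d\nu = \int_{X\times X}\bigl(f(x)-f(y)\bigr)\,d\xi(x,y) \leq \int_{X\times X} d(x,y)\, d\xi(x,y). \notag
\end{align}
Taking the infimum over couplings gives $\int f\, d\mu - \int f\, d\nu \leq W_1(\mu,\nu)$, and then the supremum over $f$.

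The reverse inequality is the content of Kantorovich duality. My plan would be to first establish the general duality
\begin{align}
W_1(\mu,\nu) = \sup\left\{\int_X \phi\, d\mu + \int_X \psi\, d\nu \;:\; \phi,\psi \in C_b(X),\ \phi(x)+\psi(y)\leq d(x,y)\right\},\notag
\end{align}
via a Hahn--Banach/minimax argument on the convex set of admissible price pairs $(\phi,\psi)$ and the convex set of couplings (one must verify tightness of $\Pi(\mu,\nu)$ to use Prokhorov and obtain an optimal coupling, which is standard on a Polish space). Then I would reduce the class of admissible pairs via the $c$-transform with $c(x,y)=d(x,y)$: replacing $\psi$ by $\psi^c(x):=\inf_y(d(x,y)-\psi(y))$ and then $\phi$ by $(\psi^c)^c$ can only increase the functional, and the resulting function is automatically $1$-Lipschitz. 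A short check shows that for cost $d$ this forces $\psi = -\phi$ with $\phi \in Lip_1(X)$, collapsing the supremum to the form stated in \eqref{Kduality}.

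The main technical obstacle is the general duality step, since it requires a careful separation argument in a non-reflexive space of continuous functions together with a tightness/compactness argument to guarantee that the primal infimum is attained and equal to the dual supremum; the subsequent $c$-transform simplification in the $L^1$ case is elementary once duality is in hand. Since this lemma is invoked as a known tool, I would ultimately refer to the standard treatments in \cite{V1,V2} for the technical details of these steps.
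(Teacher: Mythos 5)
Your proposal is correct and is essentially the same approach as the paper, which offers no proof of this lemma at all but simply records it as a well-known result and refers to \cite{V1,V2} --- exactly the standard treatments you fall back on. Your sketch (the easy inequality via couplings, the hard one via general Kantorovich duality plus the $c$-transform reduction for the metric cost) is the standard argument found in those references and contains no gaps at the level of detail given.
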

  We define \emph{a random walk} on $X$ as a family $\{m_x\}_{x\in X}$ of Borel probability 
  measures on $X$ such that
    the map $x\mapsto m_x$ is a Borel measurable map from $(X,d)$ to $(\Pro_1(X), W_1)$. 
 The following proposition is needed for the proof of Theorem \ref{main1}. 
   \begin{prop}[\cite{O1}]
   \label{geodproperty}
    Let $(X,d)$ be a geodesic metric space, $\{m_x\}_{x\in X}$ a random walk and $K\in\R$. 
    Assume that there exists $\epsilon>0$ such that $\kappa(x,y)\geq K$ holds for any $x,y\in X$ 
    with $d(x,y)<\epsilon$. Then we have $\inf_{x,y\in X}\kappa(x,y)\geq K$.
   \end{prop}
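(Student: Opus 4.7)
The plan is to reduce the global claim to the local hypothesis by dividing a minimizing geodesic from $x$ to $y$ into short segments and telescoping the distances $W_1(m_{z_i}, m_{z_{i+1}})$ along the resulting chain. In effect, one exploits the fact that the function $(u,v) \mapsto W_1(m_u, m_v)$ satisfies a triangle inequality in each variable, so a Lipschitz-type bound for nearby points can be integrated along a geodesic.

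For arbitrary $x, y \in X$ with $d(x,y) \geq \epsilon$ (the case $d(x,y) < \epsilon$ is immediate), I would fix a geodesic $\gamma : [0,1] \to X$ with $\gamma(0) = x$, $\gamma(1) = y$, and $d(\gamma(s), \gamma(t)) = |s-t|\, d(x,y)$. Pick an integer $n > d(x,y)/\epsilon$ and set $z_i := \gamma(i/n)$ for $i = 0, \dots, n$. Then $d(z_i, z_{i+1}) = d(x,y)/n < \epsilon$, so the hypothesis yields $W_1(m_{z_i}, m_{z_{i+1}}) \leq (1-K)\, d(z_i, z_{i+1})$. Applying the triangle inequality for $W_1$ on $\Pro_1(X)$ gives
\begin{align*}
W_1(m_x, m_y) \leq \sum_{i=0}^{n-1} W_1(m_{z_i}, m_{z_{i+1}}) \leq (1-K) \sum_{i=0}^{n-1} d(z_i, z_{i+1}) = (1-K)\, d(x,y),
\end{align*}
where the last equality uses that $\{z_i\}$ lie on a minimizing geodesic so the intermediate distances add up to $d(x,y)$. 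Dividing by $d(x,y)$ yields $\kappa(x,y) \geq K$, and taking the infimum completes the argument.

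Formally the proof is routine once the triangle inequality for $W_1$ is in hand; the only subtlety worth flagging is sign-consistency, namely that $1-K \geq 0$, which holds automatically because $\kappa(x,y) \leq 1$ for every pair by definition \eqref{cRiccidef}, so the local hypothesis forces $K \leq 1$. Consequently I do not anticipate any serious technical obstacle, beyond ensuring that each $m_{z_i}$ lies in $\Pro_1(X)$ so that the $W_1$ distances are finite and the triangle inequality is applicable --- but this is guaranteed by the very definition of a random walk in this paper.
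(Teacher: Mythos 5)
Your argument is correct and is exactly the standard chaining proof of this proposition (the paper itself gives no proof, deferring to Ollivier \cite{O1}, where the same subdivision-plus-triangle-inequality argument appears): subdivide a minimizing geodesic into pieces of length less than $\epsilon$, apply the local bound $W_1(m_{z_i},m_{z_{i+1}})\leq (1-K)\,d(z_i,z_{i+1})$ to each piece, and telescope using the triangle inequality for $W_1$ on $\Pro_1(X)$. The only minor remark is that your sign caveat is unnecessary: the bound $W_1(m_{z_i},m_{z_{i+1}})\leq(1-K)\,d(z_i,z_{i+1})$ is a direct rearrangement of $\kappa(z_i,z_{i+1})\geq K$ and the summation goes through regardless of the sign of $1-K$.
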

    \begin{rem}
     If there exists a limit $\lim_{y\rightarrow x}\kappa(x,y)$ and if the limit is 
     bounded below by $\kappa$ for any $x\in X$, then we have 
     $\inf_{x,y\in X}\kappa(x,y)\geq \kappa$ by Proposition \ref{geodproperty}.
    \end{rem}
 \section{Proof of Main Theorems}
 We define a function $F : \mathbb{R}_{\geq 0}\rightarrow \mathbb{R}_{\geq 0}$ by 
 $F(r):=\int_0^r s_{K,N}(t)^{N-1}\, dt$ where $\mathbb{R}_{\geq 0}$ denotes the set of 
 nonnegative real numbers. 
  We need the following lemma to prove Theorem \ref{main1}.
  \begin{lem}
  \label{lem2} 
   Let $(X,d,\nu)$ be a geodesic metric measure space satisfying 
   $[BG_{K,N}]$. For given $r>0$ we take two points $x,y\in X$ such that $d(x,y)<r$. 
 We have
   \begin{align}
    \frac{\nu\left(B_r(x)\setminus B_r(y)\right)}{\nu\left(B_r(x)\right)}\leq
    1-\frac{F(r-d(x,y)/2)}{F(r+d(x,y)/2)}=\frac{F'\!(r-d(x,y)/2)}{F(r+d(x,y)/2)}d(x,y)+o(d(x,y))\label{lemeq2}
   \end{align}
   as $d(x,y)\rightarrow 0$.
  \end{lem}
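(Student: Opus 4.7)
The plan is to exploit the geodesic hypothesis by introducing the midpoint and comparing the two concentric-looking balls $B_r(x)$ and $B_r(y)$ against a pair of balls centered at that midpoint. Let $z$ be a midpoint of $x$ and $y$, so that $d(x,z)=d(y,z)=d(x,y)/2$, which exists because $X$ is a geodesic space. Set $d := d(x,y)$ for brevity.

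The first step is purely metric: a double triangle-inequality argument will show
\[
B_{r-d/2}(z)\subset B_r(x)\cap B_r(y)\qquad\text{and}\qquad B_r(x)\subset B_{r+d/2}(z).
\]
The first inclusion follows because any $w\in B_{r-d/2}(z)$ satisfies $d(w,x)\le d(w,z)+d(z,x)<r$, and symmetrically for $y$; the second because any $w\in B_r(x)$ satisfies $d(w,z)\le d(w,x)+d(x,z)<r+d/2$. Rewriting the left-hand side of (\ref{lemeq2}) as
\[
\frac{\nu(B_r(x)\setminus B_r(y))}{\nu(B_r(x))}
=1-\frac{\nu(B_r(x)\cap B_r(y))}{\nu(B_r(x))},
\]
the two inclusions, after taking $\nu$-measures, yield
\[
\frac{\nu(B_r(x)\cap B_r(y))}{\nu(B_r(x))}
\ge \frac{\nu(B_{r-d/2}(z))}{\nu(B_{r+d/2}(z))}.
\]

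The second step is to feed the right-hand side into the Bishop--Gromov hypothesis $[BG_{K,N}]$ applied at the center $z$ with inner radius $r-d/2$ and outer radius $r+d/2$, which gives
\[
\frac{\nu(B_{r+d/2}(z))}{\nu(B_{r-d/2}(z))}\le \frac{F(r+d/2)}{F(r-d/2)},
\]
and hence, after combining with the previous display, the first inequality in (\ref{lemeq2}). (The admissibility condition on the radii in $[BG_{K,N}]$ is automatic for $K\le 0$ and holds for $d$ small enough when $K>0$, which is the relevant regime for the asymptotic.)

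The final step is the Taylor expansion as $d\to 0$. Since $F(t)=\int_0^t s_{K,N}(s)^{N-1}\,ds$ is smooth with $F'=s_{K,N}^{N-1}$, one has
\[
F(r+d/2)-F(r-d/2)=\int_{r-d/2}^{r+d/2} F'(t)\,dt = F'(r-d/2)\,d + o(d),
\]
while $F(r+d/2)\to F(r)>0$ stays bounded away from zero, so dividing through gives the claimed expansion. The main obstacle, which is really more of a bookkeeping point, is making sure the comparison goes the right way in each inequality so that the numerator $F(r-d/2)$ appears on the smaller side and $F(r+d/2)$ on the larger; the midpoint $z$ is precisely what makes those inclusions symmetric and allows a single application of Bishop--Gromov to handle both balls simultaneously.
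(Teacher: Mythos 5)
Your proposal is correct and follows essentially the same route as the paper: introduce the midpoint $z$, establish the inclusions $B_{r-d/2}(z)\subset B_r(x)\cap B_r(y)\subset B_r(x)\subset B_{r+d/2}(z)$, apply $[BG_{K,N}]$ at $z$ to compare $\nu(B_{r+d/2}(z))$ with $\nu(B_{r-d/2}(z))$, and conclude with the Taylor expansion of $F$. The only cosmetic difference is that you phrase the measure comparison as a lower bound on the ratio $\nu(B_r(x)\cap B_r(y))/\nu(B_r(x))$ while the paper bounds $\nu(B_r(x)\setminus B_r(y))$ directly; these are the same computation.
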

  \begin{proof}
  We prove (\ref{lemeq2}). 
  Since $X$ is a geodesic metric space, there exists $z\in X$ such that 
  $d(x,y)/2=d(x,z)=d(y,z)$. Setting $d:=d(x,y)$, we have 
  $\B{r-d/2}{z}\subset\B{r}{x}\cap\B{r}{y}\subset\B{r}{x}\subset \B{r+d/2}{z}$. 
  By the condition $[BG_{K,N}]$, we get 
  \begin{align}
   \nu(\B{r}{x})\leq \nu(\B{r+d/2}{z})\leq \frac{F(r+d/2)}{F(r-d/2)}\nu(\B{r-d/2}{z}).\notag
  \end{align}
  Hence we have 
  \begin{align}
   \nu(\B{r}{x}\setminus\B{r}{y})&=\nu(\B{r}{x})-\nu(\B{r}{x}\cap\B{r}{y})\notag\\
      &\leq \nu(\B{r}{x})-\nu(\B{r-d/2}{z})\notag\\
      &\leq \left\{1-\frac{F(r-d/2)}{F(r+d/2)}\right\}\nu(\B{r}{x})\notag
  \end{align}
  which implies the inequality in (\ref{lemeq2}). Since $F$ is a $C^1$ function, we have 
 \begin{align}
  F\!\left(r+\frac{d}{2}\right)=F\!\left(r-\frac{d}{2}\right)+F'\!\!\left(r-\frac{d}{2}\right)d+o\!\left(d\right)\notag
 \end{align}
 as $d\rightarrow 0$. This completes the proof. 
  \end{proof}
   \begin{proof}[Proof of Theorem \ref{main1}]
   Take any two points $x,y\in X$ with 
   $d(x,y)<r$. Without loss of generality, we may assume $\nu(\B{r}{x})\geq \nu(\B{r}{y})$. 
    By the assumption, $\nu(\B{r}{x}\cap\B{r}{y})/\nu(\B{r}{x})\leq \nu(\B{r}{x}\cap\B{r}{y})/\nu(\B{r}{y})$. 
    By Lemma \ref{lem2} we have
    \begin{align}
      m_x\left(B_r(x)\setminus B_r(y)\right)\leq \frac{F'(r-d/2)}{F(r+d/2)}d+o(d),
    \end{align}
    where $d:=d(x,y)$. 
    
    \begin{figure}[h]
     \begin{minipage}{55mm}
       \begin{center}
    \includegraphics[width=5.5cm]{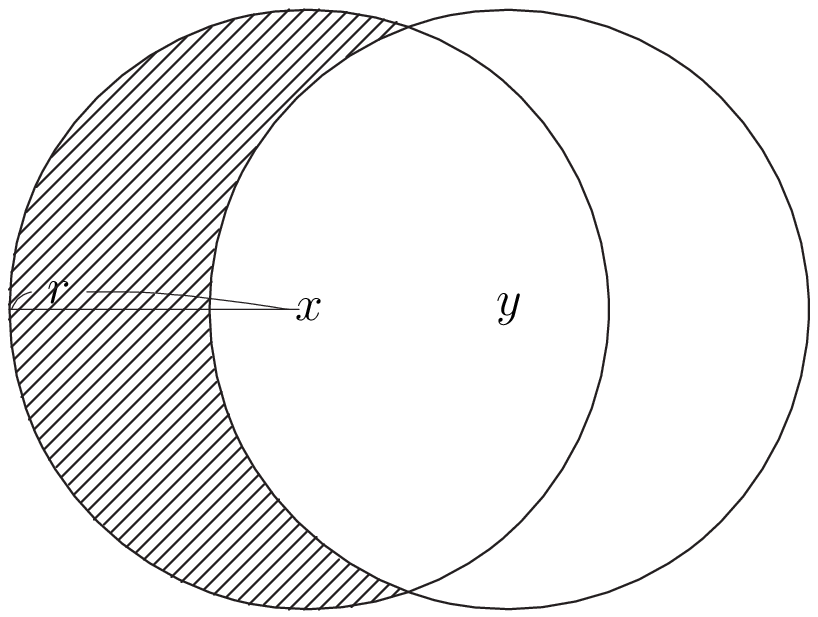}
       \end{center}
       \caption{\footnotesize{$m_x(B_r(x)\setminus B_r(y))$}}
       \end{minipage}
    \begin{minipage}{65mm}
     \begin{center}
     \includegraphics[width=5.5cm]{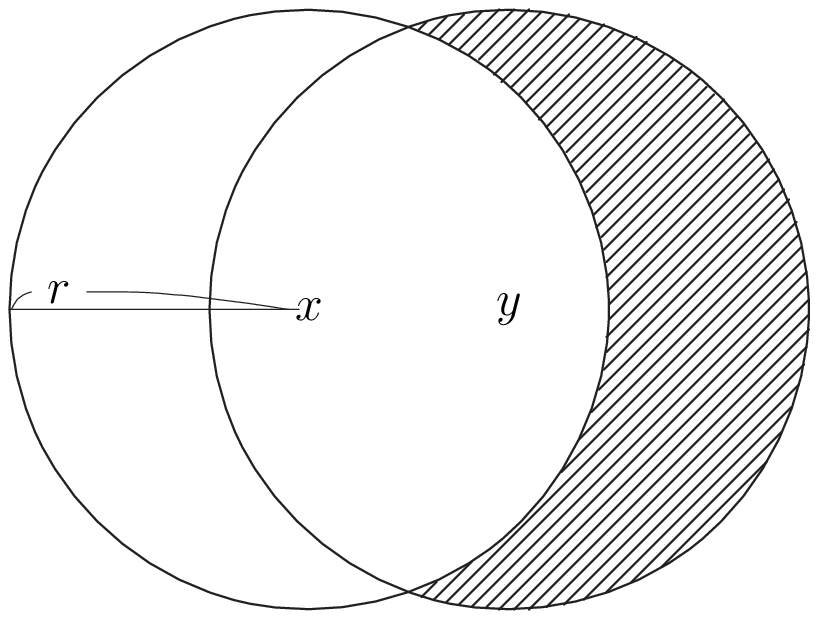}
     \end{center}
     \caption{\footnotesize{$m_y(B_r(y)\setminus B_r(x))$}}
    \end{minipage}
    \end{figure}
    
    We consider the following transportation plan. 
    We transport $m_x$ to $m_y$. 
    Since the mass in $B_r(x)\cap B_r(y)$ measured by $m_x$ does not have to move, 
    we simply transport the mass in $B_r(x)\setminus B_r(y)$ to 
    $B_r(y)\setminus B_r(x)$. 
    In general, this plan is far from an optimal one. 
    We have 
     \begin{align}
      W_1(m_x,m_y)
      &\leq m_x\left(B_r(x)\setminus B_r(y)\right)\cdot (d+2r)\notag\\
      &\leq(d+2r)\left\{\frac{F'(r-d/2)}{F(r+d/2)}d+o(d)\right\}\notag
     \end{align}
     as $d\rightarrow 0$. 
     Then, by the definition of the coarse Ricci curvature and by the continuity of $F$, 
     \begin{align}
      \kappa(x,y)&=1-\frac{W_1(m_x,m_y)}{d(x,y)}\label{main1pfeq}\\
      &\geq 1-(d+2r)\left\{\frac{F'(r-d/2)}{F(r+d/2)}+\frac{o(d)}{d}\right\}\notag\\
      &\longrightarrow 1-2r\frac{F'\!(r)}{F\!(r)}\quad \text{as}\quad d\rightarrow 0.\notag 
     \end{align} 
     By Proposition \ref{geodproperty}, we have (\ref{main1pfeq}) for all $x,y\in X$. 
     This completes the proof of Theorem \ref{main1}.
   \end{proof}
   \begin{rem}[Heat kernel on Riemannian manifold]
   Let $p_t(x,y)$, $t>0$, $x,y\in M$, be the heat kernel on a Riemannian manifold $M$. 
   $p_t(x,\cdot)$ is 
   a fundamental solution of the heat equation $\partial_tu=\Delta u$, where $\Delta$ denotes 
   the Laplace-Beltrami operator. 
   A complete Riemannian manifold with Ricci curvature bounded by below always has 
   the heat kernel \cite{vRS}. 
   We set 
   $\nu_t=\nu*p_t=\left(\int_M p_t(x,\cdot)d\nu(x)\right)\!\mathrm{vol}$, where 
   $\mathrm{vol}$ denotes the Riemannian volume measure. We agree that $\nu_0=\nu$. 
   We call $\{\nu_t\}_{t\geq 0}$ 
   \emph{the heat distribution} or 
   \emph{heat flow} emanating from $\nu$.   
  The following is taught us by Nicola Gigli. 
   \begin{prop}[\cite{AGS2} p.44]
   \label{main3}
    Let $(M,g)$ be a complete Riemannian manifold 
    with Ricci curvature bounded below by $K$. Let $m^t_x$ be the heat distribution 
    emanating from the Dirac measure $\delta_x$ for $x\in M$.  
    Then the coarse Ricci curvature associated with $\{m^t_x\}_{x\in X}$ for any $t>0$ satisfies 
    \begin{align}
     \inf_{x,y\in M}\kappa(x,y) \geq 1-e^{-Kt}.\notag
    \end{align}
   \end{prop}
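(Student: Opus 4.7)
The plan is to reduce the statement to the now-classical contraction estimate for the heat semigroup under a lower Ricci bound. On a complete Riemannian manifold with $\mathrm{Ric}\geq K$, the dual heat semigroup $P_t^{*}$ acting on probability measures is an $e^{-Kt}$-contraction in the $L^1$-Wasserstein metric:
\[
W_1(P_t^{*}\mu,\,P_t^{*}\nu)\leq e^{-Kt}\,W_1(\mu,\nu)
\]
for all $\mu,\nu\in\Pro_1(M)$. This is the content of the cited passage in \cite{AGS2}, and in its $W_2$ form it goes back to von Renesse--Sturm; the $W_1$ statement follows from the $W_2$ one via Jensen's inequality once one knows that $m_x^t$ has finite second moment, which is automatic under $\mathrm{Ric}\geq K$.

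Given this input, the proof is essentially tautological. First I would identify $m_x^t$ with $P_t^{*}\delta_x$: the measure $p_t(x,\cdot)\,\mathrm{vol}$ is by construction the fundamental solution of the heat equation based at $x$, hence it agrees with the image of the Dirac mass $\delta_x$ under the dual semigroup at time $t$. Next, for arbitrary $x,y\in M$, I would apply the contraction estimate to $\mu=\delta_x$ and $\nu=\delta_y$, using the elementary fact that $W_1(\delta_x,\delta_y)=d(x,y)$ (as the product measure is the only coupling of two Dirac masses), to obtain
\[
W_1(m_x^t,\,m_y^t)\leq e^{-Kt}\,d(x,y).
\]

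Plugging this into the definition of the coarse Ricci curvature gives
\[
\kappa(x,y)=1-\frac{W_1(m_x^t,\,m_y^t)}{d(x,y)}\geq 1-e^{-Kt},
\]
and taking the infimum over $x,y\in M$ yields the proposition. The only nontrivial step is the heat-semigroup contraction itself, which is imported as a black box; its proof (via either Kuwada's duality with the Bakry--\'Emery gradient estimate or a coupling of Brownian motions) is the one genuinely substantial ingredient. The reduction from that estimate to the coarse Ricci lower bound presents no obstacle.
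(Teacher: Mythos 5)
Your argument is correct and coincides with the paper's treatment: the paper offers no independent proof of Proposition \ref{main3}, citing instead the $W_1$-contraction of the dual heat semigroup under $\mathrm{Ric}\geq K$ from \cite{AGS2} (going back to \cite{vRS}), and your reduction from that contraction estimate to the coarse Ricci lower bound is exactly the intended one. The passage via $W_2$ and $W_1(\delta_x,\delta_y)=d(x,y)$ is also sound.
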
 
   In \cite{AGS2}, they proved more general spaces, 
   called Riemann Ricci curvature bounded from below, have a lower bound of coarse Ricci curvature. 
   \end{rem}
  
   \begin{proof}[Proof of Theorem \ref{main2}]
    As (2) follows from (1), it suffices to show (1). 
    
    By the definition of the coarse Ricci curvature, a lower bound 
    $\kappa$ should satisfy $\kappa\leq 1$. Then we have $\kappa\leq 1\leq 2-\kappa$. 
    Accordingly we assume $\lambda\neq 1$. 
   Let $f$ be a non-constant Lipschitz function on $X$ such that $\Delta f=\lambda f$ 
   and $k$ the smallest Lipschitz constant of $f$. Replacing $f$ by 
   $(1/k)f$ if necessary, we assume that
    \begin{align}
    \label{lipconst}
     \sup_{x\neq y}\frac{|f(x)-f(y)|}{d(x,y)}=1.
    \end{align}
    By using (\ref{Kduality}), we obtain that
     \begin{align}
     \label{sym}
      d(x,y)(1-\kappa)&\geq W_1(m_x,m_y)\geq \int f\, dm_x-\int f\, dm_y\\
      &=f(x)-f(y)+\Delta f(y)-\Delta f(x)\notag\\
      &=(1-\lambda)(f(x)-f(y))\notag
     \end{align}
    for any $x,y\in X$. Since (\ref{sym}) is symmetric for $x$ and $y$, we have
     \begin{align}
      |1-\lambda|\cdot|f(x)-f(y)|&\leq d(x,y)(1-\kappa).\notag
     \end{align}
    By (\ref{sym}) and (\ref{lipconst}) it turns out that
     \begin{align}
      \frac{1-\kappa}{|\lambda-1|}\geq 1.\notag
     \end{align}
    Thus we get $\kappa\leq \lambda\leq 2-\kappa$. This completes the proof of Theorem\,\ref{main2}.
   \end{proof}
   To prove Corollary \ref{cor2}, it suffices to show the following Lemma. 
   \begin{lem}
   \label{lem3}
    Let $(X,d,\nu, \{m_x\})$ be as in Corollary \ref{cor2} and $v$ 
    an eigenfunction of the Laplacian 
    for the eigenvalue $\lambda\neq 1$. Then $v$ is a Lipschitz function. 
   \end{lem}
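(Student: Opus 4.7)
The plan is to exploit the concrete form of the averaging operator for the $r$-step random walk. Since $\lambda\neq 1$, the eigenfunction equation $\Delta v=\lambda v$ rearranges to
\begin{equation*}
(1-\lambda)v(x)=\int v\,dm_x=\frac{1}{\nu(B_r(x))}\int_{B_r(x)}v\,d\nu,
\end{equation*}
so $v$ coincides, up to the nonzero factor $1-\lambda$, with its own ball-average. Proving that $v$ is Lipschitz then amounts to proving that the right-hand side depends Lipschitz-continuously on $x$.

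I would carry this out in three steps. First, since $X$ is compact its diameter $D$ is finite, so $B_D(x)\supseteq X$ for every $x$, and the Bishop-Gromov inequality yields the uniform lower bound
\begin{equation*}
\inf_{x\in X}\nu(B_r(x))\geq\frac{F(r)}{F(D)}\nu(X)=:c>0,
\end{equation*}
where $F(r)=\int_0^r s_{K,N}(t)^{N-1}\,dt$. Second, for the Laplacian to be defined pointwise $v$ must be $m_x$-integrable for every $x\in X$; since $m_x\ll\nu$ and $\nu(X)<\infty$, this forces $v\in L^1(X,\nu)$. The averaging formula then gives $|v(x)|\leq\|v\|_{L^1(\nu)}/(|1-\lambda|\,c)$, so $v\in L^\infty(X)$.

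Third, for pairs with $d(x,y)<r/2$, I would decompose
\begin{equation*}
\int v\,dm_x-\int v\,dm_y=\Bigl(\tfrac{1}{\nu(B_r(x))}-\tfrac{1}{\nu(B_r(y))}\Bigr)\int_{B_r(x)}v\,d\nu+\tfrac{1}{\nu(B_r(y))}\int(\chi_{B_r(x)}-\chi_{B_r(y)})v\,d\nu,
\end{equation*}
and bound each term by a constant multiple of $\|v\|_\infty\,\nu(B_r(x)\triangle B_r(y))/c$. Lemma~\ref{lem2}, applied symmetrically to $(x,y)$ and $(y,x)$, supplies $\nu(B_r(x)\triangle B_r(y))\leq C\,d(x,y)$ with $C=C(r,K,N)$, so dividing by $|1-\lambda|$ yields local Lipschitz control. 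For pairs with $d(x,y)\geq r/2$, the boundedness from the previous step gives $|v(x)-v(y)|\leq 2\|v\|_\infty\leq(4\|v\|_\infty/r)d(x,y)$. Combining the two regimes produces a global Lipschitz constant.

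The main obstacle is making Lemma~\ref{lem2} genuinely quantitative rather than merely asymptotic: its statement is phrased as a Taylor expansion as $d(x,y)\to 0$, whereas I need the linear bound $\nu(B_r(x)\setminus B_r(y))\leq C\,d(x,y)$ uniformly on $d(x,y)\leq r/2$. Fortunately this is immediate from the non-asymptotic inequality already established in its proof, namely $\nu(B_r(x)\setminus B_r(y))/\nu(B_r(x))\leq 1-F(r-d/2)/F(r+d/2)$, whose right-hand side is a $C^1$ function of $d$ vanishing at $d=0$ and is therefore dominated by a linear function on the compact interval $[0,r/2]$.
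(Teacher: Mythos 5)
Your proof is correct and follows essentially the same route as the paper: rewrite $v$ (up to the factor $1-\lambda$) as its own ball average, decompose the difference of averages into a normalization term and a symmetric-difference term, and control $\nu(B_r(x)\triangle B_r(y))$ via Lemma \ref{lem2}. The extra care you take --- the uniform lower volume bound from compactness, the explicit $L^\infty$ bound on $v$, the separate treatment of $d(x,y)\geq r/2$, and the observation that the non-asymptotic inequality inside the proof of Lemma \ref{lem2} yields a genuine linear bound --- only makes explicit steps that the paper leaves implicit (e.g.\ its unjustified use of $\sup_{B_{2r}(x)}|u|$).
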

   \begin{proof}
    Setting $u:=(1-\lambda)v$, we have  
    \[
     u(x)=\int u\, dm_x=\frac{1}{\nu\left(B_{r}(x)\right)}\int_{B_r(x)}u\, d\nu.
    \]
    Then it suffices to prove that $u$ is a Lipschitz function.
    It is easy to see that $u$ is continuous. In fact, since $X$ is geodesic space, 
    \begin{align}
     &|u(x)-u(y)|=\left|\frac{1}{\nu\left(B_r(x)\right)}\int_{B_r(x)}u\, d\nu
     -\frac{1}{\nu\left(B_r(y)\right)}\int_{B_r(y)}u\, d\nu\right|\notag\\
     &=\left|\frac{1}{\nu\left(B_r(x)\right)}\left(\int_{B_r(x)}u\, d\nu-\int_{B_r(y)}u\, d\nu\right)
     +\left(\frac{1}{\nu\left(B_r(x)\right)}-\frac{1}{\nu\left(B_r(y)\right)}\right)\int_{B_r(y)}u\, d\nu\right|\notag\\
     &\leq \frac{1}{\nu\left(B_r(x)\right)}\int_{B_r(x)\triangle B_r(y)}|u|\, d\nu
     +\frac{\nu\left(B_r(x)\triangle B_r(y)\right)}{\nu\left(B_r(x)\right)\nu\left(B_r(y)\right)}
     \int_{B_r(y)}|u|\, d\nu\label{pflemeq1}\notag\\
     &\longrightarrow 0\qquad\text{as}\quad y\rightarrow x\notag
    \end{align}
    where $A\triangle B:=(A\setminus B)\cup (B\setminus A)$. 
    Since $B_r(x),B_r(y)\subset B_{2r}(x)$ for 
    $d(x,y)<r$ and $X$ satisfies a Bishop-Gromov inequality, we have 
    \begin{align}
     &|u(x)-u(y)|\leq \frac{1}{\nu\left(B_r(x)\right)}\int_{B_r(x)\triangle B_r(y)}|u|\, d\nu
     +\frac{\nu\left(B_r(x)\triangle B_r(y)\right)}{\nu\left(B_r(x)\right)\nu\left(B_r(y)\right)}
     \int_{B_r(y)}|u|\, d\nu\notag\\
     &\leq 2\sup_{B_{2r}(x)}|u|\cdot m_x\left(B_r(x)\triangle B_r(y)\right)\notag\\
     &=2\sup_{B_{2r}(x)}|u|\cdot m_x(B_r(x)\setminus B_r(y))\notag\\
     &\leq 2\sup_{B_{2r}(x)}|u|\left(\frac{F'(r-d(x,y)/2)}{F(r+d(x,y)/2)}d(x,y)+o(d(x,y))\right)\notag
    \end{align}
    which leads us to
    \[
     |u(x)-u(y)|\leq \mathcal{C}\,d(x,y)
    \]
    for any $y$ sufficiently close to $x$, where $\mathcal{C}=2(F'(r)/F(r)+1)\sup_{B_{2r}(x)}|u|$. 
    This means that $u$ is a local Lipschitz function. 
    Since $X$ is a compact metric space, $u$ is a Lipschitz function. 
   \end{proof}
   \begin{proof}[Proof of Corollary \ref{cor2}]
    By Lemma \ref{lem3}, any eigenfunction for the eigenvalue $\lambda\neq 1$ is Lipschitz. 
    Then we have $\kappa\leq\lambda\leq 2-\kappa$ for any $\lambda$ by Theorem \ref{main2}. 
    This completes the proof. 
   \end{proof}
   \begin{cor}
    Let $(X,d,\nu)$ be a compact geodesic metric measure space satisfying $[BG_{K,N}]$ for 
    two real numbers $K$ and $N>1$.  
    Then any eigenvalue $\lambda$ of the Laplacian associated with 
    the $r$-step random walk satisfies 
    \begin{align}
     -2r\frac{s_{K,N}(r)^{N-1}}{\int_0^r s_{K,N}(t)^{N-1}\, dt}\leq \lambda
     \leq 2+2r\frac{s_{K,N}(r)^{N-1}}{\int_0^r s_{K,N}(t)^{N-1}\, dt}.\notag
     \end{align} 
   \end{cor}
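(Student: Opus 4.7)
The plan is to deduce this corollary by chaining together the two main theorems already proved in the paper. There is essentially no new work to do: Theorem \ref{main1} converts the Bishop-Gromov hypothesis into an explicit lower bound on the coarse Ricci curvature, and Corollary \ref{cor2} converts any lower bound on the coarse Ricci curvature on a compact geodesic metric measure space into a two-sided bound on the Laplacian spectrum.

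First I would verify that the hypotheses of both results are in place. We are given a compact geodesic metric measure space $(X,d,\nu)$ satisfying $[BG_{K,N}]$ for real numbers $K$ and $N>1$, so the hypotheses of Theorem \ref{main1} are met for any $r$ in the admissible range. Setting $F(r):=\int_0^r s_{K,N}(t)^{N-1}\,dt$ and applying Theorem \ref{main1}, we obtain
\begin{equation*}
\kappa := \inf_{x,y\in X}\kappa(x,y)\ \geq\ 1-2r\,\frac{s_{K,N}(r)^{N-1}}{F(r)}.
\end{equation*}

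Next, since $X$ is compact, geodesic, satisfies a Bishop-Gromov inequality, and has a lower bound $\kappa$ for the coarse Ricci curvature associated with the $r$-step random walk, Corollary \ref{cor2} applies and tells us that every eigenvalue $\lambda$ of the Laplacian satisfies $\kappa\leq\lambda\leq 2-\kappa$. Substituting the explicit lower bound for $\kappa$ from the previous display, the inequality $\lambda\geq\kappa$ immediately gives
\begin{equation*}
\lambda\ \geq\ 1-2r\,\frac{s_{K,N}(r)^{N-1}}{F(r)}\ \geq\ -2r\,\frac{s_{K,N}(r)^{N-1}}{F(r)},
\end{equation*}
where the last inequality is simply $1\geq 0$; similarly $\lambda\leq 2-\kappa\leq 1+2r\,s_{K,N}(r)^{N-1}/F(r)\leq 2+2r\,s_{K,N}(r)^{N-1}/F(r)$, giving the stated upper bound.

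There is no real obstacle: the argument is a one-line combination of Theorem \ref{main1} and Corollary \ref{cor2}, together with the trivial observation that dropping (or adding) a $1$ only weakens the bound. The only point worth flagging is that the inequalities as stated are slightly weaker than the sharper bounds $1\pm 2r\,s_{K,N}(r)^{N-1}/F(r)$ one obtains directly from the substitution; this looseness is harmless and keeps the two sides of the stated inequality symmetric about $1$ in a visually clean way.
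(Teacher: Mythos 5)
Your proposal is correct and is exactly the paper's argument: the author's own proof consists of the single sentence that the corollary follows from Theorem \ref{main1} and Corollary \ref{cor2}. Your additional observation that the substitution actually yields the sharper bounds $1\pm 2r\,s_{K,N}(r)^{N-1}/F(r)$, of which the stated inequality is a weakening, is accurate and worth noting.
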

   \begin{proof}
    The corollary follows from Theorem \ref{main1} and Corollary \ref{cor2}. 
   \end{proof}
   \begin{cor}\label{corfinite}
    Let $X$ be a finite state space and $\{m_x\}_{x\in X}$ a random walk. 
    Assume that $\kappa(x,y)\geq \kappa$ for any $x,y\in X$ and for a constant $\kappa$. 
    Then any eigenvalue $\lambda$ of the Laplacian satisfies $\kappa\leq \lambda\leq 2-\kappa$.
   \end{cor}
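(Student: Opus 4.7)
The plan is to derive this corollary as an essentially immediate consequence of Theorem \ref{main2}(1), where the finiteness of $X$ supplies the Lipschitz hypothesis for free. In contrast to the proof of Corollary \ref{cor2}, no Bishop--Gromov inequality is needed here: the role played there by Lemma \ref{lem3}, which promotes continuity of an eigenfunction to Lipschitz regularity via a local volume estimate, is trivialized when $X$ is finite.

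First I would observe that on any finite metric space, every function $f\colon X \to \mathbb{R}$ is automatically Lipschitz, since
\[
\sup_{x\neq y}\frac{|f(x)-f(y)|}{d(x,y)}
\]
is a supremum over the finite set $\{(x,y)\in X\times X : x\neq y\}$ and is therefore a maximum, and in particular finite. Consequently, every eigenfunction of the Laplacian $\Delta$ is Lipschitz.

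Next, given an eigenvalue $\lambda$ of $\Delta$ with a nonconstant eigenfunction $f$, I would feed $f$ directly into Theorem \ref{main2}(1) to conclude that $\kappa \leq \lambda \leq 2-\kappa$. Because the argument is a single-step reduction, there is no substantive obstacle. The only interpretive point worth remarking is the standard convention that the eigenvalues under consideration are those supported by nonconstant eigenfunctions; the constant eigenfunction contributes only the trivial eigenvalue $0$ and is excluded by design from the hypothesis of Theorem \ref{main2}(1).
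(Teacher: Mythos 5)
Your proposal is correct and matches the paper's own proof, which likewise just notes that every function on a finite set is Lipschitz and then invokes Theorem \ref{main2}. Your additional remark about excluding the constant eigenfunction is a reasonable clarification but does not change the argument.
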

   \begin{proof}
    Any function on a finite set is Lipschitz. We apply Theorem\,\ref{main2}.
   \end{proof}
   \begin{rem}
    Ollivier proved Corollary \ref{corfinite} if $\kappa>0$ and 
    if an unique invariant distribution is reversible. 
    We do not need these assumption. 
   \end{rem}
  \section*{Acknowledgement}
 The author is grateful to Professor Nicola Gigli for pointing out Proposition \ref{main3}, 
 Professor Shin-ichi Ohta for helpful comments and 
 Professor Takashi Shioya for reading this paper and giving useful advices. 
  \begin{bibdiv}
\begin{biblist}

\bib{AGS2}{article}{
   author={Ambrosio, Luigi},
   author={Gigli, Nicola},
   author={Savar{\'e}, Giuseppe},
   title={Metric measure spaces with Riemannian Ricci curvature bounded from below},
   journal={arXiv:1109.0222},
}

\bib{AGS3}{article}{
   author={Ambrosio, Luigi},
   author={Gigli, Nicola},
   author={Savar{\'e}, Giuseppe},
   title={Calculus and heat flow in metric measure spaces and applications to spaces with Ricci bounds from below},
   journal={arXiv:1106.2090},
}

\bib{BJL}{article}{
    author={Frank Bauer, J{\"u}rgen Jost, and Shiping Liu},
    title={Ollivier-Ricci curvature and the spectrum of the normalized graph Laplace operator},
    journal={arXiv: 1105.3803v1},
}


\bib{E}{article}{
   author={Erbar, Matthias},
   title={The heat equation on manifolds as a gradient flow in the Wasserstein space},
   language={English, with English and French summaries},
   journal={Ann. Inst. Henri Poincar\'e Probab. Stat.},
   volume={46},
   date={2010},
   number={1},
   pages={1--23},
}

\bib{GKO}{article}{
   author={Gigli, Nicola},
   author={Kuwada, Kazumasa},
   author={Ohta, Shin-ichi},
   title={Heat flow on Alexandrov spaces},
   journal={arXiv:1008.1319},
}

\bib{JKO}{article}{
   author={Jordan, Richard},
   author={Kinderlehrer, David},
   author={Otto, Felix},
   title={The variational formulation of the Fokker-Planck equation},
   journal={SIAM J. Math. Anal},
   volume={29},
   date={1998},
   number={1},
   pages={1--17},
}

\bib{J}{article}{
   author={Juillet, Nicolas},
   title={Geometric inequalities and generalized Ricci bounds in the
   Heisenberg group},
   journal={Int. Math. Res. Not. IMRN},
   date={2009},
   number={13},
   pages={2347--2373},
}

\bib{JL}{article}{
    author={J{\"u}rgen Jost and Shiping Liu},
    title={Ollivier's Ricci curvature, local clustering and curvature dimension inequalities on graphs},
    journal={arXiv: 1103.4037v2},
}

\bib{LYY}{article} {
    author = {Lin, Yong},
    author={Yau, Shing-Tung},
    title = {Ricci curvature and eigenvalue estimate on locally finite
              graphs},
    journal = {Math. Res. Lett.},
    volume = {17},
    year = {2010},
    number = {2},
    pages = {343--356},
}

\bib{LV}{article}{
   author={Lott, John},
   author={Villani, C{\'e}dric},
   title={Ricci curvature for metric-measure spaces via optimal transport},
   journal={Ann. of Math. (2)},
   volume={169},
   date={2009},
   number={3},
   pages={903--991},
}

\bib{Oh}{article}{
   author={Ohta, Shin-ichi},
   title={On the measure contraction property of metric measure spaces},
   journal={Comment. Math. Helv.},
   volume={82},
   date={2007},
   number={4},
   pages={805--828},
}


\bib{O1}{article}{
   author={Ollivier, Yann},
   title={Ricci curvature of Markov chains on metric spaces},
   journal={J. Funct. Anal.},
   volume={256},
   date={2009},
   number={3},
   pages={810--864},
 
}


\bib{Sa}{article}{
   author={Savar{\'e}, Giuseppe},
   title={Gradient flows and diffusion semigroups in metric spaces under
   lower curvature bounds},
   language={English, with English and French summaries},
   journal={C. R. Math. Acad. Sci. Paris},
   volume={345},
   date={2007},
   number={3},
   pages={151--154},
}

\bib{St1}{article}{
   author={Sturm, Karl-Theodor},
   title={On the geometry of metric measure spaces. I},
   journal={Acta Math.},
   volume={196},
   date={2006},
   number={1},
   pages={65--131},
  
}

\bib{St2}{article}{
   author={Sturm, Karl-Theodor},
   title={On the geometry of metric measure spaces. II},
   journal={Acta Math.},
   volume={196},
   date={2006},
   number={1},
   pages={133--177},
}

\bib{V1}{book}{
   author={Villani, C{\'e}dric},
   title={Topics in optimal transportation},
   series={Graduate Studies in Mathematics},
   volume={58},
   publisher={American Mathematical Society},
   place={Providence, RI},
   date={2003},
   pages={xvi+370},
}

\bib{V2}{book}{
   author={Villani, C{\'e}dric},
   title={Optimal transport},
   series={Grundlehren der
 Mathematischen Wissenschaften [Fundamental
   Principles of Mathematical Sciences]},
   volume={338},
   note={Old and new},
   publisher={Springer-Verlag},
   place={Berlin},
   date={2009},
   pages={xxii+973},
}

\bib{vRS}{article}{
   author={von Renesse, Max-K.},
   author={Sturm, Karl-Theodor},
   title={Transport inequalities, gradient estimates, entropy, and Ricci
   curvature},
   journal={Comm. Pure Appl. Math.},
   volume={58},
   date={2005},
   number={7},
   pages={923--940},
}



\end{biblist}
\end{bibdiv}  
\end{document}